\documentclass{amsart}

\usepackage{amsmath,amssymb,amsthm}
\usepackage[colorlinks,allcolors=blue,bookmarksdepth=2]{hyperref}
\usepackage[capitalise,noabbrev]{cleveref}

\numberwithin{equation}{section}

\theoremstyle{plain}
\newtheorem{theorem}[equation]{Theorem}
\newtheorem{lemma}[equation]{Lemma}
\newtheorem{proposition}[equation]{Proposition}
\newtheorem{corollary}[equation]{Corollary}
\newtheorem{conjecture}[equation]{Conjecture}
\theoremstyle{definition}
\newtheorem{definition}[equation]{Definition}
\theoremstyle{remark}
\newtheorem{remark}[equation]{Remark}

\DeclareMathOperator{\wt}{wt}
\DeclareMathOperator{\supp}{supp}

\newcommand{\F}{\mathbb{F}_2}
\newcommand{\B}{\mathcal{B}}
\newcommand{\ip}[2]{\langle #1,#2\rangle}

\title[A counterexample to Tokareva's conjecture]{A counterexample to Tokareva's conjecture on sums of two bent functions}
\author{Yansong Feng}
\address{MiniMax}
\email{ivan@minimax.io}

\author{Hao Jiang}
\address{MiniMax}
\email{dee@minimax.io}

\author{Guancheng Li}
\address{MiniMax}
\email{atum@minimax.io}

\author{Tianyi Li}
\address{MiniMax}
\email{n0b0dy@minimax.io}

\author{Lili Tang}
\address{MiniMax}
\email{tanglee@minimax.io}

\subjclass[2020]{Primary 06E30, 94D10; Secondary 94A60}
\keywords{Boolean function, bent function, Walsh transform, bent-sum decomposition}

\begin{document}

\begin{abstract}
It is well known that the sum of two bent functions on $\F^n$ has algebraic degree at most $n/2$ for even $n\ge 4$. Tokareva conjectured that the converse holds: every Boolean function with degree at most $n/2$ can be written as a sum of two bent functions. The conjecture has been verified for even $n\leq 6$ and for several special classes of Boolean functions. We disprove this conjecture by showing that $x_1x_2(x_3x_4+x_5x_6+x_7x_8)$ is not a sum of two bent functions on $\F^8$. We extend this construction to obtain counterexamples in every even dimension $n\ge 8$. Using Shaporenko's reduction, we also obtain explicit counterexamples to Tokareva's derivative conjecture in every even dimension $n\ge 10$.
\end{abstract}
\maketitle

\section{Introduction}\label{sec:introduction}
Bent functions were introduced by Rothaus~\cite{rothaus1976}, who also proved that for even $n\ge 4$, every bent function on $\F^n$ has algebraic degree at most $n/2$, and hence so does any sum of two bent functions.
Tokareva~\cite{tokareva2011} asked whether the converse holds, which is stated as follows:
\begin{conjecture}[Tokareva~\cite{tokareva2014}]
\label{conj:sum}
Let $n$ be even. Every Boolean function on $\F^n$ of algebraic degree
at most $n/2$ is a sum of two bent functions on $\F^n$.
\end{conjecture}

This question was subsequently recorded as an important open problem by Carlet~\cite[Open Problem~2]{carlet2014open} and revisited by Tokareva in~\cite{tokareva2014,tokareva2015book}.
An affirmative answer would imply that the logarithm of the number of bent functions has the same asymptotic order as that of the size of the Reed--Muller code of degree at most $n/2$. This would close the gap in asymptotic order between the best known lower and upper bounds on this logarithm, a gap that remains substantial despite decades of research~\cite{tokareva2015book}.

Tokareva~\cite{tokareva2011} initially verified \Cref{conj:sum} for $n \le 6$ via computer search. Subsequently, Qu, Fu, Dai, and Li~\cite{qu2016} provided a theoretical proof of the conjecture for all $n \le 6$ as well as for all quadratic functions. For $n = 8$, Tokareva~\cite{tokareva2014} proved that every cubic function can be expressed as the sum of at most four bent functions, while Shaporenko~\cite{shaporenko2022} showed that any bent function of degree at most $3$ can be decomposed into the sum of two bent functions.

In this paper, we show that this conjecture does not hold for $n\geq 8$ by constructing counterexamples. We prove a necessary condition for a Boolean function can be decomposed as the sum of two bent functions. We then construct a function of degree $4$ on $\F^8$,
\[
 f_8(x_1,\ldots,x_8)
  =x_1x_2(x_3x_4+x_5x_6+x_7x_8),
\]
and show that it does not satisfy this condition. Thus, $f_8$ is a counterexample to this conjecture. We also extend this construction to all even $n\geq 8$. Furthermore, there is another conjecture on bent functions proposed by Tokareva~\cite{tokareva2016}, which will be introduced as \Cref{conj:derivative} later. It was related to the above bent-sum decompositions by Shaporenko in~\cite{shaporenko2023}. This conjecture was disproved by Potapov for all sufficiently large $n$ by a counting argument~\cite{potapov2023}. Instead, we construct
an explicit counterexample for $n\geq 10$.

The remainder of this paper is organized as follows. \Cref{sec:pre} introduces the basic notation and definitions for bent functions. In \Cref{sec:congruence}, we prove a necessary condition on the weight of sums of two bent functions. \Cref{subsec:bent-sums} presents a counterexample to \Cref{conj:sum} in every even dimension $n\ge 8$. Finally, \Cref{subsec:derivatives} adapts this construction to provide an explicit counterexample to \Cref{conj:derivative} for all even $n\ge 10$.

\section{Preliminaries}
\label{sec:pre}
Throughout this paper, all vector spaces and polynomials are defined over the binary field $\F$. Let $e_1,\ldots,e_n$ denote the standard basis of $\F^n$, where $e_i$
has $1$ in the $i$-th coordinate and $0$ elsewhere. Every Boolean function $f$ on $\F^n$ has
a unique \emph{algebraic normal form} (ANF), which is represented as a multivariate polynomial
\[
 f(x)=\sum_{I\subseteq\{1,\ldots,n\}}a_I\prod_{i\in I}x_i,
 \qquad a_I\in\F.
\]
The \emph{algebraic degree} of $f$, denoted by $\deg(f)$, is the maximum cardinality $|I|$ for which $a_I\ne 0$. We set $\deg(0)=-\infty$. Furthermore, the \emph{support} of $f$ and its \emph{Hamming weight} are defined, respectively, by
\[
 \supp(f)=\{x\in\F^n:f(x)=1\} \qquad \text{and} \qquad \wt(f)=|\supp(f)|.
\]
The function $f$ is called \emph{balanced} if $\wt(f)=2^{n-1}$.

An \emph{affine} subspace of $\F^n$ is a set $x_0+V$, where $V$ is a linear
subspace of $\F^n$ and $x_0\in\F^n$. Its dimension is $\dim(V)$.

For any two vectors $x,\omega\in\F^n$, we denote their standard inner product by $\ip{x}{\omega}=\sum_{i=1}^n x_i\omega_i$. The
Fourier transform of a real-valued function $\phi$ on $\F^n$ is given by
\[
 \widehat\phi(\omega)=\sum_{x\in\F^n}\phi(x)(-1)^{\ip{x}{\omega}},
 \qquad \omega\in\F^n.
\]

Accordingly, the \emph{Walsh--Hadamard transform} of a Boolean function $f$ is the Fourier transform of its sign function $(-1)^f$, i.e., $W_f=\widehat{(-1)^f}$, or explicitly,
\[
 W_f(\omega)=\sum_{x\in\F^n}(-1)^{f(x)+\ip{x}{\omega}}, \qquad \omega\in\F^n.
\]

The Fourier transform satisfies the following fundamental properties, which are known as the \emph{Fourier inversion formula} and \emph{Parseval's identity}, respectively.

\begin{proposition}
Let $\phi$ be a real-valued function on $\F^n$. Then for all $x\in\F^n$,
\[
 \phi(x)=2^{-n}\sum_{\omega\in\F^n}\widehat\phi(\omega)(-1)^{\ip{x}{\omega}}.
\]
Moreover,
\[
 \sum_{\omega\in\F^n}\widehat\phi(\omega)^2 = 2^n\sum_{x\in\F^n}\phi(x)^2.
\]
\end{proposition}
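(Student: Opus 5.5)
The plan is to prove both identities by direct computation, using the character orthogonality relation on $\F^n$:
\[
 \sum_{\omega\in\F^n}(-1)^{\ip{z}{\omega}}=\begin{cases}2^n,& z=0,\\ 0,& z\neq 0.\end{cases}
\]
This is the only real input. For $z=0$ every term equals $1$. For $z\neq 0$, pick a coordinate $i$ with $z_i=1$. The involution $\omega\mapsto\omega+e_i$ flips the sign $(-1)^{\ip{z}{\omega}}$, so the terms cancel in pairs. I would record this as a preliminary observation.

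For the inversion formula, substitute the definition of $\widehat\phi$ into the right-hand side. Interchanging the two finite sums gives
\[
 2^{-n}\sum_{\omega}\sum_{y}\phi(y)(-1)^{\ip{y}{\omega}+\ip{x}{\omega}} = 2^{-n}\sum_{y}\phi(y)\sum_{\omega}(-1)^{\ip{x+y}{\omega}}.
\]
Here I use bilinearity of the inner product over $\F$, together with the fact that $(-1)^a$ depends only on $a$ mod $2$. By orthogonality the inner sum is $2^n$ when $y=x$ and $0$ otherwise, so the expression reduces to $\phi(x)$.

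For Parseval's identity, expand $\widehat\phi(\omega)^2$ as a double sum over $x,y$:
\[
 \sum_\omega\widehat\phi(\omega)^2=\sum_{x,y}\phi(x)\phi(y)\sum_\omega(-1)^{\ip{x+y}{\omega}}.
\]
Orthogonality again keeps only the diagonal terms $x=y$, each with weight $2^n$. This gives $2^n\sum_x\phi(x)^2$. Since $\phi$ is real-valued, no complex conjugation is needed.

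There is no genuine obstacle here. The only point needing care is the orthogonality lemma, specifically the pairing argument for $z\neq0$. Everything else is the interchange of finite sums, which is automatic.
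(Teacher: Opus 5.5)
Your proof is correct. The orthogonality relation $\sum_{\omega\in\F^n}(-1)^{\ip{z}{\omega}}=0$ for $z\neq 0$ follows from the sign-reversing involution $\omega\mapsto\omega+e_i$. Combined with the interchange of finite sums, it yields both the inversion formula and Parseval's identity exactly as you describe. The paper states this proposition without proof, as the standard Fourier inversion and Parseval identities, and your argument is the standard one it implicitly relies on.
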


\begin{definition}
Let $n$ be even. A Boolean function $f$ on $\F^n$ is \emph{bent} if
$|W_f(\omega)|=2^{n/2}$ for every $\omega\in\F^n$.
\end{definition}
 We denote the set of all $n$-variable bent functions by $\B_n$. In~\cite{rothaus1976}, Rothaus proved that for even $n\ge 4$, every bent function on $\F^n$ has algebraic degree at most $n/2$, and hence so does any sum of two bent functions.
\begin{proposition}[Rothaus~\cite{rothaus1976}]
\label{prop:bent-degree-bound}
Let $n\ge 4$ be even. Every bent function $f$ on $\F^n$
has algebraic degree at most $n/2$. Consequently, for any
$f_1,f_2\in\B_n$,
\[
 \deg(f_1+f_2)\le \frac{n}{2}.
\]
\end{proposition}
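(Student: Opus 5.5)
The plan is to prove Rothaus's bound by relating the coefficients of the algebraic normal form to weights of restrictions of $f$, and then controlling those weights through the Walsh spectrum. For $I\subseteq\{1,\ldots,n\}$ with $|I|=k$, the Möbius inversion formula gives
\[
 a_I=\sum_{x:\ \supp(x)\subseteq I} f(x) \pmod 2,
\]
so $a_I$ is the parity of $\wt(f|_{V_I})$, where $V_I=\mathrm{span}\{e_i:i\in I\}$ is a $k$-dimensional subspace. Hence $\deg(f)\le n/2$ will follow once we show that, for every bent $f$ and every subspace $V$ of dimension $k>n/2$, the restriction $f|_V$ has even weight.

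To get this parity statement we express $\sum_{x\in V}(-1)^{f(x)}$ through the Walsh transform. By Fourier inversion, $(-1)^{f(x)}=2^{-n}\sum_\omega W_f(\omega)(-1)^{\ip{x}{\omega}}$. Summing over $x\in V$ and using $\sum_{x\in V}(-1)^{\ip{x}{\omega}}=|V|\cdot 1_{V^\perp}(\omega)$, we obtain
\[
 \sum_{x\in V}(-1)^{f(x)}=2^{k-n}\sum_{\omega\in V^\perp}W_f(\omega).
\]
Since $f$ is bent, $W_f(\omega)=\pm 2^{n/2}$, and $|V^\perp|=2^{n-k}$. The sum on the right is therefore $2^{n/2}$ times a sum of $2^{n-k}$ signs $\pm1$. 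Such a sum of signs is congruent to $2^{n-k}$ modulo $2$, so it is even whenever $k<n$. This gives
\[
 \sum_{x\in V}(-1)^{f(x)}\equiv 0 \pmod{2^{k-n/2+1}}\quad (k<n).
\]
On the other hand, $\sum_{x\in V}(-1)^{f(x)}=2^k-2\,\wt(f|_V)$. When $k>n/2$ we have $k-n/2+1\ge 2$, so the left side is divisible by $4$. For $k\ge 2$ this forces $2\,\wt(f|_V)\equiv 0\pmod 4$, i.e.\ $\wt(f|_V)$ is even and $a_I=0$.

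The edge case $k=n$ needs separate handling, because then $V^\perp=\{0\}$ and we only get $\sum_x(-1)^{f(x)}=W_f(0)=\pm2^{n/2}$. This gives $\wt(f)=2^{n-1}\mp 2^{n/2-1}$, which is even precisely when $n/2-1\ge1$, i.e.\ $n\ge4$. This is the step where the hypothesis $n\ge 4$ is used, and it is the only delicate point. The rest is routine bookkeeping of the $2$-adic valuation in the range $n/2<k<n$.

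The consequence for $f_1+f_2$ is immediate: the set of functions of degree at most $n/2$ is closed under addition, since it is the linear span of monomials of degree at most $n/2$.
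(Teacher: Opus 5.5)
Your proof is correct. Reading $a_I$ as the parity of $\wt(f|_{V_I})$ and applying Poisson summation $\sum_{x\in V}(-1)^{f(x)}=2^{k-n}\sum_{\omega\in V^\perp}W_f(\omega)$ is the right setup. The observation that a sum of $2^{n-k}$ signs is even for $k<n$ then supplies exactly the extra factor of $2$ needed down to $k=n/2+1$. The case $k=n$ reduces to $\wt(f)=2^{n-1}\mp2^{n/2-1}$ being even for $n\ge4$.

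The paper gives no proof of this proposition and only cites Rothaus. Your argument is the classical one, and it uses the same trick as the paper's proof of \Cref{thm:divisibility}: the parity of a sum of normalized $\pm1$ Walsh values gains one factor of $2$ beyond the obvious $2$-adic bound.
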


For $a\in\F^n$, the \emph{derivative} of $f$ in the direction $a$ is
\[
 D_a f(x)=f(x)+f(x+a).
\]
If $f$ is nonconstant, then $\deg(D_a f)\le\deg(f)-1$.
Moreover, $D_a f(x+a)=D_a f(x)$ for all $x,a\in\F^n$.

A vector $a$ is a \emph{period} of $f$ if $D_a f=0$, or equivalently,
if $f(x+a)=f(x)$ for all $x\in\F^n$.
The periods of $f$ form a linear subspace of $\F^n$.

For even $n$, a Boolean function is bent if and only if all its
derivatives in nonzero directions are balanced. In particular,
a bent function has no nonzero periods.

\section{A necessary condition for bent-sum decompositions}
\label{sec:congruence}
We will show that if $f$ is the sum of two bent functions on $\F^n$ whose support lies in an affine subspace of dimension $n - k$ with $k<n/2$, then $\wt(f)$ is divisible by $2^{k+1}$. To this end, we first specialize the signed disagreement formula of Kolomeec and Pavlov~\cite{kolomeecpavlov2009} to a pair of bent functions.

\begin{lemma}
\label{lem:sign-restriction}
Let $f = f_1 + f_2$ with $f_1, f_2 \in \B_n$. Define $\phi = \tfrac{1}{2}\bigl((-1)^{f_1} - (-1)^{f_2}\bigr)$. Then $\widehat{\phi}(\omega) \in \{0, \pm 2^{n/2}\}$ for every $\omega \in \F^n$.
\end{lemma}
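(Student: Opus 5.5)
By linearity of the Fourier transform,
\[
 \widehat{\phi}(\omega)=\tfrac12\bigl(W_{f_1}(\omega)-W_{f_2}(\omega)\bigr).
\]
Since $f_1,f_2\in\B_n$, we can write $W_{f_i}(\omega)=2^{n/2}(-1)^{\tilde f_i(\omega)}$ for some signs $(-1)^{\tilde f_i(\omega)}\in\{\pm1\}$. Then
\[
 \widehat{\phi}(\omega)=2^{n/2-1}\bigl((-1)^{\tilde f_1(\omega)}-(-1)^{\tilde f_2(\omega)}\bigr).
\]
The bracket takes only the values $0$ and $\pm 2$. It equals $0$ when the two signs agree and $\pm2$ when they differ. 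Hence $\widehat{\phi}(\omega)\in\{0,\pm 2^{n/2}\}$ for every $\omega$, which is the claim.

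The argument has three steps. First, express $\widehat\phi$ as half the difference of the Walsh transforms, which is immediate from linearity. Second, use the definition of bentness to write each Walsh value as $\pm 2^{n/2}$. Third, do the trivial case split on whether the two signs agree or differ.

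There is no real obstacle here. The only point to state explicitly is that bentness gives $|W_{f_i}(\omega)|=2^{n/2}$ exactly, not merely as a bound, so each Walsh value is precisely $+2^{n/2}$ or $-2^{n/2}$. The rest is a one-line computation.

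We could also record the companion identity $\phi(x)=\pm1$ if $f(x)=1$ and $\phi(x)=0$ otherwise, i.e. $\phi^2$ is the indicator of $\supp(f)$. This follows because $(-1)^{f_1}-(-1)^{f_2}$ vanishes exactly where $f_1=f_2$. It is not needed for the lemma itself, but it is the form in which the lemma will presumably be combined with Parseval's identity later. Parseval then gives
\[
 \#\{\omega:\widehat\phi(\omega)\neq0\}\cdot 2^n=2^n\wt(f).
\]
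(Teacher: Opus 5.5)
Your proof is correct and is the paper's argument: linearity gives $\widehat\phi=\tfrac12\bigl(W_{f_1}-W_{f_2}\bigr)$, and bentness forces each Walsh value to be exactly $\pm 2^{n/2}$, so half their difference lies in $\{0,\pm 2^{n/2}\}$. The paper also writes out, as your side remark does, that $\phi$ equals $\pm1$ on $\supp(f)$ and $0$ elsewhere, which is the form later combined with Parseval in the divisibility theorem.
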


\begin{proof}
Since $f = f_1 + f_2$, we have $f_2(x) = f_1(x)$ for $x \notin \supp(f)$ and $f_2(x) = f_1(x) + 1$ for $x \in \supp(f)$. Consequently, $(-1)^{f_2(x)}$ equals $(-1)^{f_1(x)}$ for $x \notin \supp(f)$ and $-(-1)^{f_1(x)}$ for $x \in \supp(f)$, which gives
\[
  \phi(x) =
  \begin{cases}
    (-1)^{f_1(x)}, & x \in \supp(f), \\
    0, & x \notin \supp(f).
  \end{cases}
\]

By the linearity of the Fourier transform, we have
\[
  \widehat{\phi}(\omega) = \tfrac{1}{2}\bigl(W_{f_1}(\omega) - W_{f_2}(\omega)\bigr).
\]
Since $f_1, f_2 \in \B_n$, both $W_{f_1}(\omega)$ and $W_{f_2}(\omega)$ belong to $\{\pm 2^{n/2}\}$. Their difference is therefore $0$ or $\pm 2^{n/2+1}$, and dividing by $2$ yields $\widehat{\phi}(\omega) \in \{0, \pm 2^{n/2}\}$.
\end{proof}

Now we prove a necessary condition for a sum of two bent functions, as in the following \Cref{thm:divisibility}.

\begin{theorem}
\label{thm:divisibility}
Let $f = f_1 + f_2$ with $f_1, f_2 \in \B_n$. If $\supp(f)$ is contained in an affine subspace of dimension $n - k$ with $k < n/2$, then $2^{k+1} \mid \wt(f)$.
\end{theorem}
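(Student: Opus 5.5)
The plan is to work with the function $\phi=\tfrac12\bigl((-1)^{f_1}-(-1)^{f_2}\bigr)$ from \Cref{lem:sign-restriction}. The proof of that lemma shows that $\phi(x)=\pm1$ on $\supp(f)$ and $\phi(x)=0$ elsewhere. I will count the frequencies $\omega$ at which $\widehat\phi$ is nonzero in two ways, and get the extra factor of $2$ from a parity argument via Fourier inversion.

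\emph{Step 1: $\wt(f)$ counts the nonzero spectral values.} Since $\phi^2$ is the indicator of $\supp(f)$, Parseval's identity gives
\[
\sum_{\omega}\widehat\phi(\omega)^2=2^n\wt(f).
\]
Let $N$ be the number of $\omega$ with $\widehat\phi(\omega)\neq0$. By \Cref{lem:sign-restriction}, each such term of the sum equals $2^n$, so $\wt(f)=N$. It therefore suffices to show $2^{k+1}\mid N$.

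\emph{Step 2: structure along $V^\perp$.} Write the affine subspace as $a+V$ with $\dim V=n-k$, so that $\dim V^\perp=k$. For $u\in V^\perp$ and $x\in a+V$ we have $\ip{x}{u}=\ip{a}{u}$. Since $\phi$ is supported in $a+V$, this gives
\[
\widehat\phi(\omega+u)=(-1)^{\ip{a}{u}}\widehat\phi(\omega).
\]
Hence the zero set of $\widehat\phi$ is a union of cosets of $V^\perp$, each of size $2^k$. So $N=2^kM$, where $M$ is the number of cosets of $V^\perp$ on which $\widehat\phi$ is nonzero.

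\emph{Step 3: $M$ is even.} This is the only non-routine step. Apply the inversion formula at the point $a$:
\[
2^n\phi(a)=\sum_\omega\widehat\phi(\omega)(-1)^{\ip{a}{\omega}}.
\]
By Step 2, the summand $\widehat\phi(\omega)(-1)^{\ip{a}{\omega}}$ is constant on each coset $\omega+V^\perp$, because the two sign changes cancel. On each of the $M$ nonzero cosets the summand equals $\epsilon_j2^{n/2}$ for some sign $\epsilon_j=\pm1$. Thus
\[
2^n\phi(a)=2^k\,2^{n/2}\sum_{j=1}^M\epsilon_j,
\]
so
\[
\sum_{j=1}^M\epsilon_j=2^{n/2-k}\phi(a),\qquad \phi(a)\in\{0,\pm1\}.
\]
Since $k<n/2$ and both $n/2$ and $k$ are integers, $2^{n/2-k}$ is even, so the sum of the $M$ signs is even. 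A sum of $M$ terms $\pm1$ has the parity of $M$, hence $M$ is even. Therefore $N=2^kM$ is divisible by $2^{k+1}$, which proves the claim.

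The main point to get right is Step 3. One must check that the phase factor $(-1)^{\ip{a}{\omega}}$ really combines with the coset relation of Step 2 to give a constant summand on each coset. This is also exactly where the hypothesis $k<n/2$ is used.
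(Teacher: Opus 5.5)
Your proof is correct and is essentially the paper's argument. The paper applies an affine change of coordinates so that $\supp(f)\subseteq\{0\}^k\times\F^{n-k}$, restricts $\phi$ to $\psi(z)=\phi(0,z)$ on $\F^{n-k}$, and applies Parseval and Fourier inversion at $0$ there; each $\beta$ with $\widehat\psi(\beta)\neq0$ corresponds to one of your $M$ nonzero cosets of $V^\perp$. You do the same thing coordinate-free via $\widehat\phi(\omega+u)=(-1)^{\ip{a}{u}}\widehat\phi(\omega)$, which spares you the routine appeal to affine invariance of bentness and weight.
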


\begin{proof}
Since bent functions and Hamming weight are preserved under invertible affine transformations, we may assume without loss of generality that $\supp(f) \subseteq \{0\}^k \times \F^{n-k}$.

Let $\phi$ be as in \Cref{lem:sign-restriction}, and define $\psi\colon \F^{n-k} \to \{0, \pm 1\}$ by $\psi(z) = \phi(0, z)$. Clearly, $\sum_{z \in \F^{n-k}} \psi(z)^2 = \wt(f)$. Since $\phi$ vanishes outside $\{0\}^k \times \F^{n-k}$, we have $\widehat{\psi}(\beta) = \widehat{\phi}(0, \beta)$ for all $\beta \in \F^{n-k}$, and thus $\widehat{\psi}(\beta) \in \{0, \pm 2^{n/2}\}$ by \Cref{lem:sign-restriction}.

By Parseval's identity on $\F^{n-k}$, we have
\[
  \sum_{\beta \in \F^{n-k}} \widehat{\psi}(\beta)^2 = 2^{n-k} \sum_{z \in \F^{n-k}} \psi(z)^2 = 2^{n-k}\wt(f).
\]
Dividing both sides by $2^n$ and noting that $2^{-n/2}\widehat{\psi}(\beta) \in \{0, \pm 1\}$, we obtain
\[
  2^{-k}\wt(f) = \sum_{\beta \in \F^{n-k}} \bigl(2^{-n/2}\widehat{\psi}(\beta)\bigr)^2
  \equiv \sum_{\beta \in \F^{n-k}} 2^{-n/2}\widehat{\psi}(\beta) \pmod 2.
\]
Moreover, by the Fourier inversion formula at $0$, we have
\[
  \sum_{\beta \in \F^{n-k}} 2^{-n/2}\widehat{\psi}(\beta) = 2^{n/2-k}\psi(0).
\]
Since $k < n/2$ and $\psi(0) \in \{0, \pm 1\}$, the right-hand side is an even integer, and hence $2^{k+1} \mid \wt(f)$.
\end{proof}

\begin{remark}
When $k=0$, \Cref{thm:divisibility} simply recovers the well-known fact that the sum of any two bent functions has even weight.
\end{remark}

We conclude this section with a special case of \Cref{thm:divisibility}, which will be used to construct our counterexamples in the next section.

\begin{corollary}
\label{cor:cylinder}
Let $n$ be even and $1 \le k < n/2$, and let $s$ be a Boolean function on $\F^{n-k}$. If the function $f$ on $\F^n$ defined by
\[
  f(x_1,\ldots,x_n) = x_1\cdots x_k\,s(x_{k+1},\ldots,x_n)
\]
is a sum of two bent functions, then $2^{k+1} \mid \wt(s)$.
\end{corollary}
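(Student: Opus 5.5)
The plan is to apply \Cref{thm:divisibility} directly; the only real choice is which affine subspace to use. The support of $f$ consists of the points $x$ with $x_1=\cdots=x_k=1$ and $s(x_{k+1},\ldots,x_n)=1$. So
\[
  \supp(f)=\{1\}^k\times\supp(s)\subseteq \{1\}^k\times\F^{n-k},
\]
and the set on the right is an affine subspace of dimension $n-k$: it is the translate of the linear subspace $\{0\}^k\times\F^{n-k}$ by $e_1+\cdots+e_k$.

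Next we compute the weight. The map $z\mapsto(1,\ldots,1,z)$ is a bijection from $\supp(s)$ onto $\supp(f)$, so $\wt(f)=\wt(s)$. Since $k<n/2$ by hypothesis, \Cref{thm:divisibility} applies to any decomposition $f=f_1+f_2$ with $f_1,f_2\in\B_n$ and gives $2^{k+1}\mid\wt(f)=\wt(s)$.

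There is no real obstacle. The only points to check are that the subspace is affine rather than linear, which is fine because \Cref{thm:divisibility} allows affine subspaces, and that the condition $k\ge 1$ plays no role beyond making the statement nontrivial.
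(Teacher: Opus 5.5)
Your proposal is correct and is the same argument as the paper's: you observe that $\supp(f)=\{1\}^k\times\supp(s)$ lies in an affine subspace of dimension $n-k$ and that $\wt(f)=\wt(s)$. You then invoke \Cref{thm:divisibility}. Your explicit note that the subspace is a translate of $\{0\}^k\times\F^{n-k}$ just spells out a step the paper leaves implicit.
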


\begin{proof}
Note that
\[
    \supp(f)
    = \bigl\{(\underbrace{1,\ldots,1}_{k\text{ times}})\bigr\}
      \times \supp(s).
\]
Thus $\supp(f)$ is contained in an affine subspace of dimension
$n-k$, and $\wt(f)=\wt(s)$. The result follows immediately from
\Cref{thm:divisibility}.
\end{proof}

\section{Counterexamples}
\label{sec:counterexamples}

\subsection{Sums of two bent functions}
\label{subsec:bent-sums}
For $\ell\ge1$, let $q_\ell$ denote the standard quadratic form on $\F^{2\ell}$ given by the direct sum of $\ell$ disjoint quadratic monomials,
\[
 q_\ell(z_1,\ldots,z_{2\ell})
   =z_1z_2+z_3z_4+\cdots+z_{2\ell-1}z_{2\ell}.
\]

\begin{lemma}
\label{lem:quadratic-weight}
For every $\ell\ge1$, the function $q_\ell$ is bent on $\F^{2\ell}$, and its weight is $\wt(q_\ell)=2^{2\ell-1}-2^{\ell-1}$.
\end{lemma}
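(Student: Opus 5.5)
The plan is to compute the Walsh transform of $q_\ell$ exactly, exploiting the fact that $q_\ell$ is a direct sum of $\ell$ copies of the two-variable function $z_1z_2$. Both bentness and the weight will then follow at once.

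First, for $\omega=(\omega_1,\ldots,\omega_{2\ell})$, the sum defining $W_{q_\ell}(\omega)$ splits over the pairs of variables:
\[
 W_{q_\ell}(\omega)=\prod_{j=1}^{\ell}\ \sum_{(a,b)\in\F^2}(-1)^{ab+a\omega_{2j-1}+b\omega_{2j}}.
\]
Second, I evaluate the two-variable factor. Summing over $b$ first gives $\sum_{a}(-1)^{a\omega_{2j-1}}\cdot 2\cdot[a=\omega_{2j}]$. This equals $2(-1)^{\omega_{2j-1}\omega_{2j}}$, so each factor has absolute value $2$. Hence
\[
 W_{q_\ell}(\omega)=2^{\ell}(-1)^{q_\ell(\omega)}.
\]
Since $|W_{q_\ell}(\omega)|=2^{\ell}=2^{(2\ell)/2}$ for every $\omega$, the function $q_\ell$ is bent.

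Third, the weight comes from evaluating at $\omega=0$. There $W_{q_\ell}(0)=\sum_x(-1)^{q_\ell(x)}=2^{2\ell}-2\wt(q_\ell)$. The formula above gives $W_{q_\ell}(0)=2^{\ell}(-1)^{q_\ell(0)}=2^\ell$. Solving for the weight yields $\wt(q_\ell)=2^{2\ell-1}-2^{\ell-1}$.

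There is no serious obstacle in this argument. The only point needing care is the sign at $\omega=0$: it is $+2^\ell$ because $q_\ell(0)=0$, and that sign is what produces the minus sign in the weight formula.
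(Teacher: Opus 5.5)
Your proposal is correct and follows the paper's proof: you factor $W_{q_\ell}(\omega)$ over the $\ell$ disjoint pairs to get $W_{q_\ell}(\omega)=(-1)^{q_\ell(\omega)}2^\ell$, which gives bentness, and you read off the weight from $W_{q_\ell}(0)=2^{2\ell}-2\wt(q_\ell)$. Your explicit evaluation of the two-variable factor (summing over $b$ first) and your remark that the sign at $\omega=0$ is positive because $q_\ell(0)=0$ supply details the paper leaves implicit.
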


\begin{proof}
For any $\omega\in\F^{2\ell}$, the Walsh transform of $q_\ell$ factors over the $\ell$ disjoint pairs of variables as
\[
 W_{q_\ell}(\omega)
 =\prod_{j=1}^\ell\Bigl(\sum_{a,b\in\F}
   (-1)^{ab+a\omega_{2j-1}+b\omega_{2j}}\Bigr)
 =(-1)^{q_\ell(\omega)}\,2^\ell.
\]
In particular, $|W_{q_\ell}(\omega)|=2^\ell$ for all $\omega\in\F^{2\ell}$, so $q_\ell$ is bent on $\F^{2\ell}$.
Taking $\omega=0$ and using
\[
 W_{q_\ell}(0)=2^{2\ell}-2\wt(q_\ell)
\]
gives the proof.
\end{proof}

\begin{theorem}
\label{thm:all-even}
Let $n\ge 8$ be even and write $n=2m$. The Boolean function on $\F^n$ defined by
\begin{equation}
 \label{eq:top-degree}
 f_n(x)
  =x_1x_2\cdots x_{m-2}\;q_3(x_{m-1},x_m,\ldots,x_{m+4})
\end{equation}
has algebraic degree $m$ and cannot be written as a sum of two bent functions on $\F^n$.
\end{theorem}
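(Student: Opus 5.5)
We will apply \Cref{cor:cylinder} with $k=m-2$ and $s=q_3$, viewed as a function on $\F^{n-k}=\F^{m+2}$ of the variables $x_{m-1},\ldots,x_n$. Note that $q_3$ only involves the six variables $x_{m-1},\ldots,x_{m+4}$. When $m>4$, the remaining variables $x_{m+5},\ldots,x_{2m}$ are free, so as a function on $\F^{m+2}$ it has weight $\wt(s)=2^{m-4}\wt(q_3)$. By \Cref{lem:quadratic-weight},
\[
 \wt(s)=2^{m-4}(2^5-2^2)=2^{m-4}\cdot 28=2^{m-2}\cdot 7 .
\]
This number is divisible by exactly $2^{m-2}=2^k$, and in particular not by $2^{k+1}$.

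Next we check the hypotheses of \Cref{cor:cylinder}. We have $n\ge 8$, so $m\ge 4$ and $k=m-2\ge 2\ge 1$. Also $k=m-2<m=n/2$. The number of coordinates is consistent, since $k+(m+2)=2m=n$. So \Cref{cor:cylinder} applies and says that if $f_n$ were a sum of two bent functions, then $2^{k+1}=2^{m-1}$ would divide $\wt(s)=7\cdot 2^{m-2}$. That is false, so $f_n$ is not a sum of two bent functions.

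It remains to compute the degree. The ANF of $f_n$ is
\[
 x_1\cdots x_{m-2}\bigl(x_{m-1}x_m+x_{m+1}x_{m+2}+x_{m+3}x_{m+4}\bigr).
\]
The three monomials are distinct, each has degree $(m-2)+2=m$, and there is no cancellation among them. Hence $\deg f_n=m=n/2$, and $f_n$ really lies within the degree bound of \Cref{conj:sum}.

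There is no real obstacle in this argument. The only care needed is the index bookkeeping: checking that $q_3$ fits inside the last $m+2$ coordinates (this needs $m+4\le 2m$, i.e.\ $m\ge4$), and that the weight picks up the factor $2^{m-4}$ from the free variables. For $n=8$ that factor is $1$, and the weight is $28$, which is not divisible by $2^{3}=8$.
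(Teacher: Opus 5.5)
Your proposal is correct and follows the paper's proof: you apply \Cref{cor:cylinder} with $k=m-2$ and $s=q_3$ regarded as a function on $\F^{m+2}$, so that $\wt(s)=2^{m-4}\cdot 28=7\cdot 2^{m-2}$ is not divisible by $2^{k+1}=2^{m-1}$. Your explicit check that the three degree-$m$ monomials are distinct, which gives $\deg f_n=m$, fills in a detail the paper states but does not prove.
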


\begin{proof}
To apply \Cref{cor:cylinder} with $k=m-2$ (note that $1\le k<m$ and $n-k=m+2$), take the Boolean function $s$ on $\F^{m+2}$ defined by
\[
 s(x_{m-1},\ldots,x_n)=q_3(x_{m-1},\ldots,x_{m+4}).
\]
For each fixed assignment to $x_{m-1},\ldots,x_{m+4}$, the remaining $m-4$ variables can be chosen arbitrarily without changing the value of $s$. Thus each assignment for which $q_3=1$ gives $2^{m-4}$ assignments for which $s=1$. By \Cref{lem:quadratic-weight} with $\ell=3$, we have $\wt(q_3)=2^5-2^2=28$, and hence
\[
 \wt(s)=2^{m-4}\wt(q_3)=28\cdot 2^{m-4}=7\cdot 2^{m-2}.
\]
Thus $2^{m-1}\nmid\wt(s)$, and the result follows from \Cref{cor:cylinder}.
\end{proof}

Thus \Cref{conj:sum} fails for every even $n\ge 8$. Since the conjecture is known to hold for $n\le 6$, $n=8$ is the smallest dimension in which it fails.

\begin{remark}
For $n=8$, the polynomial $f_8=x_1x_2(x_3x_4+x_5x_6+x_7x_8)$ is already known in the literature. In~\cite{carlet2025}, Carlet, Feukoua, and S\u{a}l\u{a}gean studied it as a quartic whose degree drops on the hyperplanes $x_1=0$, $x_2=0$, and $x_1+x_2=0$. Up to a permutation of variables, it also appears in the classification of eight-variable quartic forms by Langevin, Leander, Rabizzoni, V\'eron, and Zanotti~\cite{langevincatalogue}. Our new contribution here is showing that $f_8$ cannot be written as a sum of two bent functions.
\end{remark}

\subsection{Derivatives of bent functions}
\label{subsec:derivatives}

Every derivative of a bent function on $\F^n$
in a nonzero direction is balanced, has algebraic degree at most
$n/2-1$, and has a nonzero period. Tokareva~\cite{tokareva2016}
conjectured that these necessary conditions are sufficient.

\begin{conjecture}[Tokareva~\cite{tokareva2016}]
\label{conj:derivative}
Let $n$ be even, and let $f$ be a balanced Boolean function on $\F^n$
of degree at most $n/2-1$. If $f$ has a nonzero period, then $f$ is
a derivative of a bent function on $\F^n$.
\end{conjecture}

Potapov~\cite{potapov2023} disproved \Cref{conj:derivative} for all
sufficiently large $n$ by a counting argument. This argument establishes
the existence of counterexamples, but does not give an explicit one.

Shaporenko~\cite{shaporenko2023} proposed a connection between
derivatives of bent functions and sums of two bent functions. Here we use the following special case.

Write $x=(x_1,x_2,z)\in\F\times\F\times\F^{n-2}$ with $z=(x_3,\ldots,x_n)\in\F^{n-2}$ and let $e_1=(1,0,\ldots,0)$.

\begin{lemma}[Shaporenko~\cite{shaporenko2023}]
\label{lem:shaporenko}
Let $f$ be a Boolean function on $\F^{n-2}$, and let $h(x_1,x_2,z)=x_2+f(z)$. Then $h=D_{e_1}g$ for some bent
function $g$ on $\F^n$ if and only if $f\in\B_{n-2}+\B_{n-2}$.
\end{lemma}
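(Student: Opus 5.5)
The plan is to reduce everything to the four restrictions of $g$ to the cosets of $\langle e_1,e_2\rangle$ and to compare Walsh transforms. The relation $D_{e_1}g=h$ says exactly that $g(1,x_2,z)=g(0,x_2,z)+x_2+f(z)$. So $g$ is determined by the two functions $g_0(z)=g(0,0,z)$ and $g_1(z)=g(0,1,z)$ on $\F^{n-2}$:
\[
 g(1,0,z)=g_0(z)+f(z),\qquad g(1,1,z)=g_1(z)+f(z)+1 .
\]
Conversely, any choice of $g_0,g_1$ defines such a $g$ with $D_{e_1}g=h$. Write $A=W_{g_0}$, $B=W_{g_1}$, $C=W_{g_0+f}$ and $D=W_{g_1+f}$. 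Splitting the Walsh sum of $g$ at $(\omega_1,\omega_2,\beta)$ over the four cosets gives
\[
 W_g(\omega_1,\omega_2,\beta)=\bigl(A+(-1)^{\omega_1}C\bigr)(\beta)+(-1)^{\omega_2}\bigl(B-(-1)^{\omega_1}D\bigr)(\beta).
\]
The elementary fact that $|u+v|=|u-v|=2^{n/2}$ holds iff one of $u,v$ is $0$ and the other is $\pm2^{n/2}$ then yields a characterization: $g$ is bent iff, for every $\beta$, exactly one of $A\pm C$ and $B\mp D$ vanishes and the other equals $\pm2^{n/2}$, for both sign choices.

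For the direction ``$\Leftarrow$'', suppose $f=f_1+f_2$ with $f_1,f_2\in\B_{n-2}$. Take $g_0=g_1=f_1$, so that $g_0+f=g_1+f=f_2$. Then $A=B=W_{f_1}$ and $C=D=W_{f_2}$, both with values in $\{\pm2^{n/2-1}\}$. Hence $A+C=B+D$ and $A-C=B-D$, and each of these values lies in $\{0,\pm2^{n/2}\}$. Moreover, of the two numbers $A+C$ and $A-C$, exactly one is zero. This is precisely the criterion above, so $g$ is bent and $h=D_{e_1}g$.

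For the direction ``$\Rightarrow$'', I would pass to the dual. Let $g$ be bent with dual $\tilde g$, and take the standard identity
\[
 \sum_x(-1)^{D_a g(x)+\ip{b}{x}}=2^{-n}\sum_\omega W_g(\omega)W_g(\omega+b)(-1)^{\ip{a}{\omega+b}}
\]
with $a=e_1$ and $b=(0,1,\beta)$. The left side equals $4W_f(\beta)$, because the $x_2$ term cancels and the sum over $x_1,x_2$ contributes $4$. The right side equals $2^{0}\sum_\omega(-1)^{\tilde g(\omega)+\tilde g(\omega+b)+\omega_1}$. Splitting this sum over $\omega_1,\omega_2$ expresses $W_f(\beta)$ through the cross-correlations of the four restrictions $\tilde g_{ij}$ of $\tilde g$ to the cosets. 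The aim is then to read off $(-1)^f$ as a product of signs of two functions $f_1,f_2$ on $\F^{n-2}$ built from $\tilde g$, or equivalently from $g_0$ and $g_1$ after re-choosing them, and to show that both are bent.

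The main obstacle is exactly this step. The criterion only tells us that, pointwise in $\beta$, one of $A\pm C$ and one of $B\mp D$ is $\pm2^{n/2}$ while the partner is $0$. It does not obviously force $g_0$ itself to be bent, since $A=\tfrac12((A+C)+(A-C))$ may take the values $0$ or $\pm2^{n/2}$. I would try first to use the freedom in the decomposition. Replacing $g_0,g_1$ by suitable combinations such as $g_0+g_1+g_1(z)\cdots$ is not allowed, so instead I would consider the two functions $u=g_0$ and $v=g_1+f+1$, namely the restrictions on the diagonal cosets $x_1=x_2$. I would then check from the sign pattern whether $W_u$ and $W_v$ have constant modulus $2^{n/2-1}$. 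Here $f=u+v+(g_0+g_1+1)$, so the plan would need $g_0+g_1$ to be affine up to a further adjustment. If this fails, I would fall back on the Canteaut--Charpin description of the four quarter restrictions of a bent function to a codimension-$2$ subspace, in terms of the dual on the orthogonal cosets. I would use it to select one coset pair whose two restrictions are bent and whose sum equals $f$.
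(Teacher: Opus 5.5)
Your ``$\Leftarrow$'' direction is correct and is the paper's construction: $g_0=g_1=f_1$ is exactly $g=f_1(z)+x_1(x_2+f(z))$. Your coset formula for $W_g$ and the criterion ``one of $u,v$ is $0$, the other $\pm 2^{n/2}$'' are also right. The ``$\Rightarrow$'' direction, however, is not proved. The dual identity only rewrites $4W_f(\beta)$ as a cross-correlation of $\tilde g$ and produces no decomposition. The diagonal-coset candidates $g_0$ and $g_1+f+1$ are not bent in general, and the Canteaut--Charpin fallback is only announced. Concretely, take $n=4$ and $z=(x_3,x_4)$. Then $g=x_1x_2+x_1x_3+x_2x_4$ is bent with $D_{e_1}g=x_2+x_3$. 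Yet its four restrictions to the cosets of $\langle e_1,e_2\rangle$ are $0$, $x_3$, $x_4$, $x_3+x_4+1$, and none of these is bent. So the two bent summands cannot in general be taken among $g_0,\,g_0+f,\,g_1,\,g_1+f+1$.

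The missing idea is to glue $g_0$ and $g_1$ along $f$. Set $v_0(z)=g(0,f(z),z)$, so that $v_0=g_0$ on $\{f=0\}$ and $v_0=g_1$ on $\{f=1\}$; in the example, $v_0=x_3x_4$. Sum over $x_1$ first. The factor $1+(-1)^{x_2+f(z)+\omega_1}$ kills every term except $x_2=f(z)+\omega_1$. For $\omega_1=0$ this gives $W_g(0,0,\beta)=2W_{v_0}(\beta)$ and $W_g(0,1,\beta)=2W_{v_0+f}(\beta)$.

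This was already inside your own formula. You have $A+C=2\sum_{f(z)=0}(-1)^{g_0(z)+\ip{\beta}{z}}$ and $B-D=2\sum_{f(z)=1}(-1)^{g_1(z)+\ip{\beta}{z}}$. Hence at $\omega_1=0$ your quantities $u+v$ and $u-v$ are exactly $2W_{v_0}(\beta)$ and $2W_{v_0+f}(\beta)$. Bentness of $g$ then immediately gives $|W_{v_0}|=|W_{v_0+f}|=2^{(n-2)/2}$, so $f=v_0+(v_0+f)\in\B_{n-2}+\B_{n-2}$. When you reduced the condition to the pointwise statement that one of $u,v$ vanishes, you discarded precisely this information.
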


\begin{proof}
Suppose $h=D_{e_1}g$ with $g$ bent, and put $v(x_2,z)=g(0,x_2,z)$.
Then $g(1,x_2,z)=v(x_2,z)+x_2+f(z)$, so
\[
 (-1)^{g(x_1,x_2,z)}=(-1)^{v(x_2,z)+x_1(x_2+f(z))}.
\]
Let $\omega=(c,d,\beta)\in\F\times\F\times\F^{n-2}$. The sum over
$x_1$ vanishes unless $x_2+f(z)=c$, so
\[
 W_g(\omega)
 =2\sum_{z\in\F^{n-2}}
   (-1)^{v(f(z)+c,\,z)+(f(z)+c)d+\ip{z}{\beta}}
 =\pm 2\,W_{v_c+df}(\beta),
\]
where $v_c(z)=v(f(z)+c,z)$ and the sign is $(-1)^{cd}$.
Because $|W_g(\omega)|=2^{n/2}$ for every $\omega$, the functions $v_0$
and $v_0+f$ have all Walsh coefficients of absolute value
$2^{(n-2)/2}$. Both are therefore bent on $\F^{n-2}$, and
$f=v_0+(v_0+f)\in\B_{n-2}+\B_{n-2}$.

Conversely, let $f=b_1+b_2$ with $b_1,b_2\in\B_{n-2}$, and put
$g(x_1,x_2,z)=b_1(z)+x_1\bigl(x_2+f(z)\bigr)$. Then
$D_{e_1}g=x_2+f=h$. The computation above, with
$v(x_2,z)=b_1(z)$, gives $v_0=v_1=b_1$ and
$|W_g(\omega)|=2\,|W_{b_1+df}(\beta)|=2^{n/2}$, since $b_1$ and
$b_1+f=b_2$ are bent. So $g$ is bent.
\end{proof}

Combining \Cref{lem:shaporenko} with \Cref{thm:all-even} gives explicit counterexamples to \Cref{conj:derivative}.

\begin{theorem}
\label{thm:derivative}
Let $n\ge 10$ be even and write $n=2m$. The Boolean function on $\F^n$ defined by
\begin{equation}
 \label{eq:hn}
 h_n(x)
  =x_2+f_{n-2}(x_3,\ldots,x_n)
  =x_2+x_3\cdots x_{m-1}\;q_3(x_m,\ldots,x_{m+5})
\end{equation}
is balanced, has algebraic degree $m-1$ and a nonzero period,
and cannot be written as a derivative of a bent function on $\F^n$.
\end{theorem}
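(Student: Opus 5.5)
The plan is to verify the four claims about $h_n$ in turn. The first three are immediate from the structure of $h_n$. The fourth is the real content, and it follows by combining \Cref{lem:shaporenko} with \Cref{thm:all-even} applied in dimension $n-2$.

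\emph{Balancedness and period.} Since $h_n = x_2 + F(x_3,\ldots,x_n)$ with $F = f_{n-2}$, for every fixed $(x_1,x_3,\ldots,x_n)$ the two choices of $x_2$ give opposite values of $h_n$. Hence $\wt(h_n)=2^{n-1}$ and $h_n$ is balanced. The variable $x_1$ does not occur in $h_n$, so $e_1$ is a nonzero period: $D_{e_1}h_n=0$.

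\emph{Degree.} Write $n-2=2(m-1)$. By \Cref{thm:all-even} applied with $m-1$ in place of $m$, the function $f_{n-2}$ has degree $m-1$; this requires $n-2\ge 8$, i.e.\ $n\ge 10$. One should also check that the variables match. In \eqref{eq:hn} the prefix $x_3\cdots x_{m-1}$ consists of $m-3=(m-1)-2$ variables, and $q_3$ is applied to the next six variables $x_m,\ldots,x_{m+5}$. This is exactly $f_{n-2}$ from \eqref{eq:top-degree} after shifting indices by $2$. Adding the linear term $x_2$ does not change the degree, since $m-1\ge 4>1$. Thus $\deg h_n=m-1=n/2-1$.

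\emph{Not a derivative of a bent function.} Suppose for contradiction that $h_n=D_ag$ for some bent $g$ on $\F^n$ and some direction $a$; necessarily $a\neq 0$, since $D_0g=0\ne h_n$. The main obstacle is that \Cref{lem:shaporenko} handles only the direction $e_1$, so the direction must be normalized first. I would argue as follows. A derivative $D_ag$ always has $a$ as a period. The period space of $h_n$ is exactly $\{0,e_1\}$: a period $p$ must satisfy $p_2+D_{p'}f_{n-2}=0$ identically, where $p'=(p_3,\ldots,p_n)$. A constant derivative $D_{p'}f_{n-2}$ would have degree $\le 0$, and I expect this to force $p'=0$. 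Heuristically, $f_{n-2}$ is a product of variables times a nondegenerate quadratic, so all its derivatives in nonzero directions are nonconstant; this is checked by differentiating in a direction involving a prefix variable (which leaves a nonzero degree-$(m-2)$ term) or a $q_3$ variable (which leaves a product of prefix variables times a nonzero linear form). With $p'=0$ we get $p_2=0$, so $p\in\{0,e_1\}$. Hence $a=e_1$, and \Cref{lem:shaporenko} with $f=f_{n-2}$ gives $f_{n-2}\in\B_{n-2}+\B_{n-2}$. This contradicts \Cref{thm:all-even}, which completes the proof.
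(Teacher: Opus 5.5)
Your treatment of balancedness, the period $e_1$, and the degree is fine. So is your nonconstancy check for directions $p'$ that touch a prefix variable or a $q_3$ variable.

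The gap is the claim that the period space of $h_n$ is exactly $\{0,e_1\}$, which is true only for $n=10$. The variables actually occurring in \eqref{eq:hn} are $x_2,\ldots,x_{m+5}$, while $n=2m$. So for $m\ge 6$ the $m-5$ coordinates $x_{m+6},\ldots,x_n$ are dummy variables of $f_{n-2}$; these are the ``remaining variables'' in the proof of \Cref{thm:all-even}, one dimension down. Your case split misses directions $p'$ supported on these coordinates, and for them $D_{p'}f_{n-2}=0$ is constant. The true period space is $K=\langle e_1,e_{m+6},\ldots,e_n\rangle$, of dimension $m-4$. Hence for $n\ge 12$ the direction $a$ could a priori be, e.g., $e_n$ or $e_1+e_n$, and ``hence $a=e_1$'' fails. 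Since \Cref{lem:shaporenko} is stated only for the direction $e_1$, your contradiction does not yet follow.

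The missing step is a normalization of the direction. Your case analysis does show that every period has zero coordinates in positions $2,\ldots,m+5$, because dummy components do not affect $D_{p'}f_{n-2}$; so $a\in K$. Choose an invertible linear map of $K$ sending $e_1$ to $a$, and extend it by the identity on $\langle e_2,\ldots,e_{m+5}\rangle$ to an invertible linear map $T$ of $\F^n$. Then $T$ leaves the coordinates $x_2,\ldots,x_{m+5}$ unchanged, and $h_n$ depends only on these, so $h_n\circ T=h_n$. Hence $D_{e_1}(g\circ T)=(D_ag)\circ T=h_n$ with $g\circ T$ bent. Now \Cref{lem:shaporenko} gives $f_{n-2}\in\B_{n-2}+\B_{n-2}$, contradicting \Cref{thm:all-even}; this is how the paper closes the argument. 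As written, your proof is complete only for $n=10$.
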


\begin{proof}
For fixed $x_1,x_3,\ldots,x_n$, each value of $x_2$ gives $h_n(x)=1$, which implies $h_n$ is balanced.
Expanding \eqref{eq:hn}, we see that $h_n$ is the sum of
the linear term $x_2$ and three distinct monomials of degree $(m-3)+2=m-1$. Hence $\deg(h_n)=m-1$.

We first show that the period space of $h_n$ is
\begin{equation}
 \label{eq:periods}
 K=\{a\in\F^n:a_2=\cdots=a_{m+5}=0\}.
\end{equation}
Every vector in $K$ is clearly a period, since $h_n$ does not
depend on the corresponding coordinates. Conversely, let
$a=(a_1,a_2,b)$ be a period of $h_n$, with
$b=(b_3,\ldots,b_n)$. Then
\[
 D_bf_{n-2}=a_2.
\]
By the weight computation in the proof of \Cref{thm:all-even},
\[
 \wt(f_{n-2})=7\cdot 2^{m-3}\ne 2^{n-3}.
\]
Thus $f_{n-2}$ is not balanced, which rules out
$D_bf_{n-2}=1$. Hence $a_2=0$, so $b$ is a period of $f_{n-2}$.
Since $f_{n-2}$ is nonzero and vanishes unless
$x_3=\cdots=x_{m-1}=1$, we must have
$b_3=\cdots=b_{m-1}=0$.
Setting $x_3=\cdots=x_{m-1}=1$ in $D_bf_{n-2}=0$ then gives
\[
 D_{(b_m,\ldots,b_{m+5})}q_3=0.
\]
By \Cref{lem:quadratic-weight}, $q_3$ is bent and hence has
no nonzero period. Thus $b_m=\cdots=b_{m+5}=0$,
which proves \eqref{eq:periods}.

Now suppose that $h_n=D_ag$ for some bent function $g$ and
some $a\ne0$. Then $D_ah_n=0$, so $a\in K$.
Choose an invertible linear map on $K$ taking $e_1$ to $a$,
and extend it by the identity on
$\langle e_2,\ldots,e_{m+5}\rangle$ to an invertible linear
map $T$ on $\F^n$.
Since $h_n$ depends only on $x_2,\ldots,x_{m+5}$, we have
$h_n\circ T=h_n$, and hence
\[
 D_{e_1}(g\circ T)=(D_ag)\circ T=h_n.
\]
As $g\circ T$ is bent, \Cref{lem:shaporenko} implies
$f_{n-2}\in\B_{n-2}+\B_{n-2}$, contradicting
\Cref{thm:all-even}.
\end{proof}
Thus \Cref{conj:derivative} fails for every even $n\ge 10$.

\subsubsection*{Use of AI}
The counterexamples and proofs in this paper were obtained through large-scale automated reasoning with LLM agents while solving Problem~10 in NSUCRYPTO~2026 (MiniMax M3.1-Flash-Preview, 19 rounds of up to 100 independent agents between 12 and 18 October 2026, 1889 reports in all), paired with 8 rounds of independent audits (227 reports).
In the first 10 rounds, the agents decomposed every cubic on $\F^8$ and our first quartic counterexample candidate into two bent functions. When $f_8$ was later examined, an early attempt to prove that $f_8\notin\B_8+\B_8$ was discarded during the audit phase because it mistakenly treated the Walsh transform as additive. Another agent then established \Cref{lem:sign-restriction} and confirmed that $f_8$ is indeed a counterexample, which in turn led to the divisibility condition in \Cref{thm:divisibility}, the general family in \Cref{thm:all-even}, and the derivative counterexamples in \Cref{thm:derivative}.
Every statement in this paper was subsequently checked by the authors. We also provide a formal verification for \Cref{thm:all-even} and \Cref{thm:derivative} in Lean~4. The authors take full responsibility for the contents.
\bibliographystyle{amsplain}
\bibliography{refs}
\end{document}